\newtheorem{thm}{Theorem}[section]
\newtheorem{THM}{Theorem}
\newtheorem{cor}[thm]{Corollary}
\newtheorem{lemma}[thm]{Lemma}
\theoremstyle{definition}
\newtheorem{remark}[thm]{Remark}
\newcommand{\nc}{\newcommand}
\nc {\hh}{\check{h}}
\nc {\DD}{\mathcal{D}}
\nc {\RR}{\mathcal{R}}
\nc {\Pp}{\mathbb{P}}
\nc {\Ss}{\mathcal{S}}
\nc {\PP}{\mathbb{P}^{2}}
\nc {\Pd}{ \check{\mathbb{P}}^{2}}
\nc {\WW}{\mathcal{W}}
\nc {\Sym}{\mathrm{Sym}}
\nc {\OO}{\mathcal{O}}
\nc {\CC}{\mathbb{C}}
\nc {\EE}{\mathcal{E}}
\nc {\MM}{\mathcal{M}}
\nc {\KK}{\mathcal{K}}
\nc {\PW}{\mathcal{P}}
\nc {\NW}{\mathcal{N}_{\WW}}
\nc {\FF}{\mathcal{F}}
\nc {\GG}{\mathcal{G}}
\nc {\ZZ}{\mathcal{Z}}
\nc {\LL}{\mathcal{L}}
\nc {\HH}{\mathcal{H}}
\nc {\NN}{\mathcal{N}}
\nc {\VV}{\mathcal{V}}
\nc {\Ww}{\mathbb{W}}
\nc {\QQ}{\mathbb{Q}}
\nc {\II}{\mathcal{I}}
\author{M. Falla Luza}
\email{maycolfl@impa.br}
\address{UFF, Universidad Federal Fluminense, rua M\'ario Santos Braga S/N, Niter\'oi, RJ-Brasil}
\author{F. Loray}
\email{frank.loray@univ-rennes1.fr}
\address{IRMAR, Universit\'e de Rennes 1, 35042 Rennes Cedex, France}
\date{}
\begin{document}
\selectlanguage{english}

\title[On the number of fibrations]{On the number of fibrations transversal to a rational curve in a complex surface}

\maketitle

\begin{abstract}
\selectlanguage{english}
We investigate the existence, and lack of unicity, of a holomorphic fibration 
by discs transversal to a rational curve in a complex surface.  

\smallskip

\selectlanguage{francais}
\noindent{\bf R\'esum\'e.} \vskip 0.5\baselineskip \noindent
{\bf Sur le nombre de fibrations transverses \`a une courbe rationnelle dans une surface. } \newline
Nous \'etudions l'existence et le d\'efaut d'unicit\'e de fibrations holomorphes en disques 
transverse \`a une courbe rationnelle dans une surface complexe.
\end{abstract}

\selectlanguage{english}

\section{Introduction}
Soit $U$ une surface complexe contenant une courbe rationnelle lisse $C$. 
On s'int\'eresse \`a la structure du germe de voisinage $(U,C)$.
Lorsque $C^2\le 0$, il est bien connu que $(U,C)$ est holomorphiquement \'equivalent
au voisinage de la section nulle dans l'espace total du fibr\'e normal $N_C$ (see \cite{Grauert,Savelev}) ;
on dit alors que $(U,C)$ est {\it lin\'earisable}. On d\'eduit ais\'ement
l'existence de nombreuses fibrations holomorphes par disques transverse \`a $C$ dans ce cas.
D'un autre c\^ot\'e, lorsque $C^2>0$, il y a un gros espace de module de germes de tels voisinages
$(U,C)$ modulo isomorphismes
(voir \cite{Hur,Mish}).
A contrario, nous montrons qu'il y a tr\`es peu de fibrations transverses (en g\'en\'eral aucune)  
dans ce cas.
Il y a des familles de dimension infinie de voisinages sans (resp. avec une unique) 
fibration pour chaque $C^2>0$. Lorsque $C^2=+1$, il existe aussi des familles de dimension
infinie de voisinages avec exactement deux fibrations. Notre r\'esultat principal est le suivant.

\begin{THM}\label{Principal} 
Soit $(U,C)$ un germe de surface au voisinage d'une courbe rationnelle $C$ (tout est lisse) avec auto-intersection $C^2>0$.
\begin{itemize}
\item Si $(U,C)$ admet au moins $3$ fibrations holomorphes transverses \`a $C$, alors $C^2=1$ et $(U,C)$ est lin\'earisable, i.e. holomorphiquement \'equivalent au voisinage d'une droite dans $\mathbb P^2$.
\item Si $C^2>1$ et $(U,C)$ admet au moins $2$ fibrations holomorphes transverses \`a $C$, 
alors $C^2=2$ et $(U,C)$ est holomorphiquement \'equivalent au voisinage de la diagonale dans $\mathbb P^1\times\mathbb P^1$.
\end{itemize}
\end{THM}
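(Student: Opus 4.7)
My proof plan rests on a jet-theoretic rigidity for transversal fibrations. Given a fibration $\pi:U\to C$ with $\pi|_C=\mathrm{id}$, its tangent line bundle $T\mathcal{F}|_C\subset TU|_C$ realizes a splitting of the normal bundle exact sequence $0\to TC\to TU|_C\to N_C\to 0$; splittings form a torsor over $H^0(\mathbb{P}^1,\mathcal{O}(2-n))$, of dimension $3-n$ for $n\le 3$ and zero otherwise. Writing $\pi(x,y)=x+\sum_{k\ge 1}f_k(x)\,y^k$ in local coordinates with $C=\{y=0\}$, a direct transition computation shows that the $k$-th coefficient of the difference of two fibrations transforms, modulo lower orders, as a section of $\mathrm{Sym}^k N_C^{\vee}\otimes TC=\mathcal{O}(2-kn)$. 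For $n\ge 2$ and $k\ge 2$ this group vanishes, so two fibrations with matching 1-jets agree to all orders along $C$; since a holomorphic function vanishing to infinite order along $C$ is zero, the two fibrations coincide on a neighborhood.

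This immediately handles $n\ge 3$: there is a unique possible 1-jet, hence at most one fibration. If $C^2>1$ and there are at least two fibrations, then $n=2$ and any two distinct fibrations must have distinct 1-jets; their difference is a nonzero constant in $H^0(\mathcal{O}(0))$, which is nowhere vanishing. The two tangent subbundles $T\mathcal{F}_i|_C$ are therefore transversal at every point of $C$, and the holomorphic product map $(\pi_1,\pi_2):U\to C\times C$ is a local biholomorphism near $C$. Since both $\pi_i$ restrict to the identity on $C$, this map yields a biholomorphism of germs $(U,C)\simeq(\mathbb{P}^1\times\mathbb{P}^1,\Delta)$, proving the second bullet.

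For the first bullet, three fibrations force either the diagonal model carrying an extra fibration, or $n=1$. To exclude the first alternative, any holomorphic fibration on a neighborhood of $\Delta\subset\mathbb{P}^1\times\mathbb{P}^1$ extends by Hartogs/Levi-type arguments to a meromorphic map $\mathbb{P}^1\times\mathbb{P}^1\dashrightarrow\mathbb{P}^1$; writing it in bihomogeneous form $[P(u,v):Q(u,v)]$ subject to $P(u,u)=u\,Q(u,u)$ and requiring no base points along $\Delta$ reduces it to one of the two projections. For $n=1$, three fibrations yield three distinct 1-jets in the $2$-dimensional affine space of splittings; I would use them to build a holomorphic map $U\to\mathbb{P}^2$ sending $C$ to a projective line and verify it is a germ biholomorphism onto a neighborhood of the line, providing the linearization.

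The principal obstacle lies in the $n=1$ case. The jet rigidity is only partial there: 1-jets form a $2$-dimensional affine space, 2-jets add another degree of freedom (sections of $\mathcal{O}(0)$), and only at order $3$ do the obstructions vanish, so formal fibrations form a $3$-parameter family, while the linearized model has only a $2$-parameter family of pencils of lines through points of $\mathbb{P}^2\setminus L$. A purely formal argument therefore cannot constrain the number of convergent fibrations. Moreover, two fibrations with distinct 1-jets may be mutually tangent at the single zero of their difference in $H^0(\mathcal{O}(1))$, so the product-map trick used for $n=2$ acquires a ramification point. Combining three fibrations into a global identification with $(\mathbb{P}^2,\text{line})$ thus demands either a birational argument resolving these pairwise tangency points or a careful use of the full linear system spanned by the three fibrations, together with a verification that the resulting map to $\mathbb{P}^2$ is a biholomorphism onto a neighborhood of a line.
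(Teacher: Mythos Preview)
Your treatment of the second bullet is clean and genuinely different from the paper's. The paper argues by blowing up $C$ at $n$ points, analyzing the tangency locus of the induced Riccati foliations on the resulting $(0)$-neighborhood, and only then invoking the product map $(g,g')$. Your cohomological jet argument --- that the $k$-th order difference of two fibrations lives in $H^0(\mathcal{O}(2-kn))$, hence vanishes for $n\ge 2$, $k\ge 2$ --- bypasses the blow-up entirely and gives ``at most one fibration for $n\ge 3$, at most a one-parameter family of $1$-jets for $n=2$'' in one stroke. The step where you exclude a third fibration on $(\mathbb P^1\times\mathbb P^1,\Delta)$ via extension to a rational map is correct in spirit, but ``Hartogs/Levi-type arguments'' is not quite the right label: what you need is that meromorphic functions on a neighborhood of an ample divisor in a smooth projective surface are rational (a Grothendieck--Hartshorne formal-functions statement), and this deserves a precise citation.

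The first bullet, however, is not proved. You yourself identify the obstruction: for $n=1$ the relevant cohomology groups $H^0(\mathcal{O}(2-k))$ do not vanish until $k\ge 3$, so formal fibrations form a $3$-parameter family and the jet rigidity gives no finiteness. Your proposed remedy --- build a map to $\mathbb P^2$ from the three fibrations and check it is a biholomorphism onto a neighborhood of a line --- is only a sketch, and you correctly note the difficulty that any two of the three fibrations are tangent somewhere on $C$ (the zero of a section of $\mathcal{O}(1)$), so the naive product map is ramified. Nothing in the proposal resolves this.

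The paper's route here is entirely different and essentially computational: it puts $(U,C)$ in the Mishustin-type normal form of its Theorem~\ref{normal-formal-form}, where a fibration forces all $a_{k,n}=0$ and the residual freedom in the normalization is a $4$-parameter group $(\alpha,\beta,\gamma,\theta)$. Three fibrations then impose three systems of polynomial equations $a^i_{j,k}=0$ in these parameters and the coefficients $b_{j,k}$, and an explicit elimination (Gr\"obner bases on the low-order equations, plus a separate lemma for a degenerate subcase) forces all $b_{j,k}=0$, i.e.\ the linear cocycle. Your cohomological viewpoint does not seem to reach this conclusion without an idea of comparable strength.
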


Un analogue non lin\'eaire de la dualit\'e projective entre les points et les droites de $\mathbb P^2$ 
\'etablie par Le Brun (see \cite[\S 1.3, 1.4]{LB}) nous fournit une correspondance bi-univoque
entre les germes de voisinages $(U,C)$ avec $C^2=1$ et les germes de structures projectives 
en $(\mathbb C^2,0)$ modulo $\mathrm{Diff}(\mathbb C^2,0)$ (tout est holomorphe).
Par une structure projective, on entend une collection de g\'eod\'esiques (une courbe passant 
par chaque point et chaque direction)
d\'efinie par une connexion affine (i.e. une connexion lin\'eaire sur le fibr\'e tangent).
De ce point de vue, on a une correspondance bi-univoque entre les fibrations transverses \`a $C$
et les d\'ecompositions de la famille de g\'eod\'esiques comme pinceau de feuilletages 
(voir par exemple \cite{Wo}),
ou encore de connexions affines \`a courbure nulle d\'efinissant la structure. 

Tous les r\'esultats pr\'esent\'es dans cette note seront d\'etaill\'es dans \cite{Nous}.


\section{Introduction}
Let $U$ be a smooth complex surface containing a smooth rational curve $C$. 
We want to understand the structure of the germ of neighborhood $(U,C)$.
When $C^2\le 0$, it is known that $(U,C)$ is holomorphically equivalent
to the neighborhood of the zero section in the normal bundle $N_C$ (see \cite{Grauert,Savelev}).
In this case, we say that $(U,C)$ is {\it linearizable}, and
one can easily deduce the existence of many fibrations by discs transversal to $C$.
On the other hand, when $C^2>0$, there is a huge moduli space of germs of neighborhoods $(U,C)$
(see \cite{Hur,Mish}).
However, we prove that there are very few (in general there are not) transversal fibrations in this latter case.
There are infinite dimensional families of neighborhoods without (resp. with a unique) 
fibration for any $C^2>0$. When $C^2=+1$, there also exist infinite dimensional families of neighborhoods
with $2$ fibrations. Our main result is

\begin{thm}\label{Principal} 
Let $(U,C)$ be a germ of surface neighborhood of a rational curve $C$ with self-intersection $C^2>0$, everything being smooth.
\begin{itemize}
\item If $(U,C)$ admits at least $3$ distinct fibrations by discs transversal to $C$, then $C^2=1$ and $(U,C)$ is  linearizable, i.e. holomorphically equivalent to the neighborhood of a line in $\mathbb P^2$.
\item If $C^2>1$ and $(U,C)$ admits at least $2$ distinct fibrations by discs transversal to $C$, then 
$C^2=2$ and $(U,C)$ is holomorphically equivalent to the neighborhood of the diagonal in $\mathbb P^1\times\mathbb P^1$.
\end{itemize}
\end{thm}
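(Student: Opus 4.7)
The strategy is to measure the tangency between two transverse fibrations by the wedge $\omega_1 \wedge \omega_2$ of their defining $1$-forms, and to control it using positivity of $N_C = \mathcal{O}_C(k)$ with $k = C^2$. A transverse fibration $\mathcal{F}_i$ is a foliation whose tangent bundle restricted to $C$ is isomorphic to $N_C$, so by the exact sequence $0 \to T\mathcal{F}_i \to T_U \to N\mathcal{F}_i \to 0$ one has $N\mathcal{F}_i|_C \cong T_C = \mathcal{O}_C(2)$. Writing $\omega_i$ for the $N\mathcal{F}_i$-valued $1$-form defining $\mathcal{F}_i$, the $2$-form $\omega_1 \wedge \omega_2$ is a global section of $K_U \otimes N\mathcal{F}_1 \otimes N\mathcal{F}_2$ whose vanishing set is the tangency locus of the two foliations; its restriction to $C$ lies in $\mathcal{O}_C(2-k)$, and its order-$m$ leading term along $C$ lives in $\mathcal{O}_C(2 - (m+1)k)$.

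\smallskip

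\textbf{Case $C^2 \ge 3$.} Then $H^0(C, \mathcal{O}_C(2-(m+1)k)) = 0$ for every $m \ge 0$, so $\omega_1 \wedge \omega_2$ vanishes to infinite order along $C$; holomorphy then forces it to vanish identically, whence $\mathcal{F}_1 = \mathcal{F}_2$. There is at most one transverse fibration, ruling out both scenarios of the theorem for $k \ge 3$.

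\smallskip

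\textbf{Case $C^2 = 2$.} Here only the order-$0$ term lies in a nontrivial cohomology group, namely $H^0(\mathcal{O}_C) = \mathbb{C}$; if it vanishes, the same iteration yields $\mathcal{F}_1 = \mathcal{F}_2$. Thus two distinct transverse fibrations must be pointwise transverse along $C$, and $(\pi_1, \pi_2): U \to C \times C$ is then a local biholomorphism at $C$, identifying $(U, C)$ with a neighborhood of the diagonal $\Delta \subset \mathbb{P}^1 \times \mathbb{P}^1$; this is the second bullet of the theorem. For the three-fibration case with $k=2$ we work in this explicit model. By the same rigidity (higher-order jets along $\Delta$ are determined by first-order data, a single complex parameter $a$), the formal power series of a hypothetical third fibration along $\Delta$ agrees with the restriction of the global meromorphic function $ap + (1-a)q$. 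For $a \notin \{0, 1\}$ this function has an indeterminacy at $(\infty, \infty) \in \Delta$, contradicting regularity of a holomorphic fibration on a neighborhood of $\Delta$; hence only the two canonical projections are admissible.

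\smallskip

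\textbf{Case $C^2 = 1$ with three fibrations.} Now $\mathcal{O}_C(1)$ is effective and the degree argument alone does not force uniqueness. We apply the Le Brun duality recalled in the French summary: germs $(U, C)$ with $C^2 = 1$ correspond bijectively, up to $\mathrm{Diff}(\mathbb{C}^2, 0)$, to germs of projective structures on $(\mathbb{C}^2, 0)$, and transverse fibrations correspond to decompositions of the geodesic family as pencils of foliations, equivalently to flat torsion-free connections representing the projective class. Differences of two such flat connections are closed $1$-forms constrained by an algebraic curvature identity; three flat connections produce two linearly independent such $1$-forms, yielding enough constraints to force the projective Weyl (Liouville) curvature to vanish. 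The projective structure is then locally flat, i.e., the standard $\mathbb{P}^2$ structure, and $(U, C)$ is linearizable. This last step, hinging on the Le Brun duality and the classification of projective structures admitting several compatible flat connections, is the main technical obstacle, going beyond the purely sheaf-theoretic degree arguments that handle the other cases.
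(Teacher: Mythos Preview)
Your cohomological approach is genuinely different from the paper's and, for the second bullet, cleaner: the paper blows up $C$ at $n$ points and analyzes the resulting Riccati foliations to force $n=2$ with empty tangency, whereas your jet argument (the order-$m$ leading term of $\omega_1\wedge\omega_2$ along $C$ lies in $H^0(\mathcal O_C(2-(m+1)k))$) dispatches $k\ge3$ outright and forces pointwise transversality along $C$ for $k=2$ in one stroke; the product of first integrals then gives the diagonal model, exactly as the paper concludes.

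The first bullet, however, has two real gaps. For $k=2$, your rigidity only lets you compare two \emph{regular} transverse fibrations (that is where the vanishing of $H^0(\mathcal O_\Delta(2-2(m+1)))$ for $m\ge1$ is used), and the pencil $\{ap+(1-a)q=\mathrm{const}\}$ is \emph{singular} at $(\infty,\infty)\in\Delta$; so ``$\mathcal F_3$ has the same $0$-jet as the pencil, hence coincides with it'' is not justified. This is fixable within your own framework: since the diagonal $PGL_2$-action on $(\mathbb P^1\times\mathbb P^1,\Delta)$ acts by scalars on $T(\mathbb P^1\times\mathbb P^1)|_\Delta\cong\mathcal O(2)\oplus\mathcal O(2)$, it preserves the $0$-jet of $\mathcal F_3$, so by your rigidity $\mathcal F_3$ is $PGL_2$-invariant; the $2$-dimensional stabilizer of a point of $\Delta$ then has to preserve the leaf through it, and chasing this yields a nontrivial M\"obius map fixing three points, a contradiction. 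The paper instead gets this from Theorem~\ref{tangency-along-C} applied to the double cover ramified along $\Delta$. For $k=1$ you essentially concede the point: the assertion that three flat torsion-free representatives of a projective class force the Liouville invariant to vanish is stated but not argued, and this is the heart of the theorem. The paper's proof here is entirely different and computational: it puts $(U,C)$ in the normal form of Theorem~\ref{normal-formal-form}, writes the existence of the other two fibrations as the polynomial system $a^i_{j,k}=0$ in the normal-form coefficients and the parameters $(\alpha_i,\beta_i,\gamma_i,\theta_i)$, and eliminates via Gr\"obner bases (together with Lemma~\ref{(a1,0,0,1), (0,b2,0,1)} and a case analysis on the tangency configuration). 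Your dual picture may well lead to a proof, but as written the $k=1$ case is a program, not an argument.
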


A non linear analogue of the projective duality between lines and points in $\mathbb P^2$ 
established by Le Brun (see \cite[\S 1.3, 1.4]{LB}) provides a one-to-one correspondance 
between germs of neighborhoods $(U,C)$ with $C^2=1$ and germs of projective structure 
at $(\mathbb C^2,0)$ up to $\mathrm{Diff}(\mathbb C^2,0)$ (everything is holomorphic).
By a projective structure, we mean a collection of geodesics (one curve for each point+direction)
defined by an affine connection (i.e. a linear connection on the tangent bundle).
From this point of view, there is a one-to-one correspondance between fibrations transversal to $C$
and decomposition of the projective structure as a pencil of foliations (see for instance \cite{Wo}),
or equivalently affine connection with vanishing curvature. 


\section{Normal Form}\label{sec:NormalForm}
Let us fix a coordinate $x:C\stackrel{\sim}{\to}\mathbb C\cup\{\infty\}$ and decompose it 
as $C=V_0\cup V_\infty$ where $V_i$ are disks around $x=i$ with $i=0, \infty$ overlapping on a neighborhood of the circle
$\{\vert x\vert=1\}$.
It is easy to see that a germ of surface $(U,C)$ can always be obtained by gluing two open sets $U_0 = V_0 \times \mathbb{D}_{\epsilon}$ and $U_\infty=V_{\infty} \times \mathbb{D}_{\epsilon}$, with coordinates $(x_i,y_i)$ by some analytic diffeomorphism of the form
$$
(x_{\infty}, y_{\infty})=\Phi(x_0, y_0)=\left( x_0^{-1} + \sum_{n\geq 1} a_{n}(x_0) y_{0}^{n}, \sum_{n\geq 1} b_{n}(x_0) y_{0}^{n} \right).
$$
which we shall call the \textit{cocycle} of the germ $(U,C)$. In restriction to $C$, we have $x=x_0=1/x_\infty$.
Of course different cocycles could give rise to isomorphic germs of surface. In this sense, it is shown in \cite{Mish} that 
we can always arrive to an almost unique normal form. In the special case $C^2=1$,
the statement can be settled as follows.

\begin{thm}\label{normal-formal-form}
Let $(U,C)$ be a germ of surface with $C^2=1$. Then, we can choose the corresponding cocycle 
in the following \emph{normal form}
$$
\Phi  = \left( \frac{1}{x} + \sum_{n\geq 4} (\sum_{k=3}^{n-1}\frac{a_{k,n}}{x^k} ) y^n   ,  \frac{y}{x} + \sum_{n\geq 3} (\sum_{k=2}^{n-1}\frac{b_{k,n}}{x^k})y^n\right).
$$
Moreover, $\Phi^i =(x+ \sum a_n^i(x)y^n, \sum b_n^i(x) y^n)$, $i=0, \infty$, are such that $\Phi^{\infty} \circ \Phi \circ \Phi^{0}$ is still in normal form (with possibly different coefficients) if, and only if, there are constants $\alpha, \beta, \gamma \in \mathbb{C}$ and $\theta \in \mathbb{C}^*$ such that

\begin{align*}
a_1^0 &= \theta (\alpha x+\beta), a_2^0 = \alpha^2 \theta^2 x + \gamma ,  a_3^0 =  \alpha^3 \theta^3 x + (\gamma \alpha \theta + \alpha \theta^3 b_{2,3}), \\
a_n^0 &= \alpha^n \theta^n x + \left( \gamma(\theta \alpha)^{n-2} + \theta^n \left(\sum_{k=1}^{n-2} {n-2 \choose k-1} \alpha^k b_{2,n-k+1}  \right) \right) , n \geq 4, \\
a_1^{\infty}&= \beta x+ \alpha , a_2^{\infty}=\beta^2 x+\frac{\gamma}{\theta^2},\\
a_n^{\infty}&= \beta^n x + \left( (n-1)\frac{\beta^{n-2}\gamma}{\theta^2} - (n-2)\alpha \beta^{n-1} - \sum_{k=1}^{n-2} k \beta^k b_{n-k, n-k+1} \right) , n\geq3, \\
b_n^0 &= \theta^n \alpha^{n-1}, \hspace{0.2cm} b_n^{\infty} = \frac{\beta^{n-1}}{\theta}, n\geq 1.
\end{align*}
\end{thm}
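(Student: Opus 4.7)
The plan is to proceed by induction on the order $n$ in $y$, normalizing the coefficients $a_n(x)$, $b_n(x)$ of $\Phi$ one order at a time. The two halves of the theorem---existence of the normal form and the description of the gauge freedom---are handled by the same core computation, run in opposite directions.

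The key step is to expand $\Phi^\infty\circ\Phi\circ\Phi^0$ in powers of $y_0$. A direct calculation shows that, modulo terms $R_n(x)$ and $S_n(x)$ that depend only on coefficients of order strictly less than $n$ (already normalized), the transformation on the order-$n$ coefficients reads
\[
\tilde a_n(x) \;=\; -\frac{a_n^0(x)}{x^2}\;+\;(b_1^0)^n\,a_n(x)\;+\;(b_1^0)^n\frac{a_n^\infty(1/x)}{x^n}\;+\;R_n(x),
\]
\[
\tilde b_n(x) \;=\; \frac{b_1^\infty(1/x)\,b_n^0(x)}{x}\;+\;(b_1^0)^n b_1^\infty(1/x)\,b_n(x)\;+\;\frac{(b_1^0)^n\,b_n^\infty(1/x)}{x^n}\;+\;S_n(x).
\]
For the existence part, one reads this as a Cousin-type equation on the overlap of $V_0$ and $V_\infty$: since $a_n^0(x)$ is a power series in $x$, the term $a_n^0(x)/x^2$ has Laurent expansion supported in degrees $\ge -2$; since $a_n^\infty(1/x)$ is a power series in $1/x$, the term $a_n^\infty(1/x)/x^n$ has Laurent expansion supported in degrees $\le -n$. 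One can thus kill every Laurent mode of $a_n$ lying outside the range $-(n-1)\le k\le -3$, matching $\sum_{k=3}^{n-1}a_{k,n}/x^k$ (the range being empty for $n\le 3$, which forces $a_1=a_2=a_3\equiv 0$). The analogous argument, with the shift by $1$ coming from the factor $1/x$ instead of $1/x^2$, yields the residual range $-(n-1)\le k\le -2$ for $b_n$, and pins $b_1$ to its linearized value $1/x$.

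For the classification, one reverses the perspective: assuming both $\Phi$ and $\tilde\Phi$ are in normal form, the Cousin equation above becomes a rigid constraint on $a_n^0,a_n^\infty,b_n^0,b_n^\infty$. At orders $n=1,2,3$ the four free parameters $(\alpha,\beta,\gamma,\theta)$ emerge: the $b_1$-equation forces $b_1^0=\theta$, $b_1^\infty=1/\theta$; the $a_1$-equation then forces $a_1^0=\theta(\alpha x+\beta)$ with $a_1^\infty=\beta x+\alpha$; and the free constant in the order-$2$ equation provides $\gamma$. For $n\ge 3$, the normal-form constraint allows only one non-trivial Laurent mode in each of $b_n^0/x$ and $b_n^\infty(1/x)/x^n$, forcing $b_n^0,b_n^\infty$ to be constants, and similarly forces $a_n^0,a_n^\infty$ to have degree $\le 1$ in $x$, resp.\ $1/x$. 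Propagating $(\alpha,\beta,\gamma,\theta)$ through the recursion then determines all the coefficients.

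The principal difficulty is the combinatorial bookkeeping needed to arrive at the explicit closed forms at arbitrary order $n$, and specifically the binomial sum $\sum_{k=1}^{n-2}\binom{n-2}{k-1}\alpha^k b_{2,n-k+1}$ that appears in $a_n^0$ (and its mirror $\sum_k k\beta^k b_{n-k,n-k+1}$ in $a_n^\infty$). These arise from the contribution of the lower-order normal-form coefficients $b_{2,m}$ to the remainder $R_n$, propagated through the multinomial expansion of $Y^k=(\sum b_j^0 y_0^j)^k$ inside $\Phi\circ\Phi^0$; the identity to be checked is then extracted, level by level, from the recursion on $n$, using the already-known shape $b_n^0=\theta^n\alpha^{n-1}$ to collapse the multinomial into a single binomial sum.
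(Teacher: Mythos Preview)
Your plan is sound. The order-by-order Laurent (Cousin-type) splitting you describe is the right mechanism, and the mode count---$a_n^0(x)/x^2$ contributing degrees $\ge -2$, $a_n^\infty(1/x)/x^n$ contributing degrees $\le -n$, with the one-step shift for $b_n$---reproduces exactly the residual ranges $3\le k\le n-1$ and $2\le k\le n-1$ of the normal form. Reversing the argument with both cocycles assumed normalized does constrain $(\Phi^0,\Phi^\infty)$ order by order, the free parameters $(\alpha,\beta,\gamma,\theta)$ entering at orders $\le 2$ as you describe; the remaining work is, as you acknowledge, the bookkeeping that tracks how the already-determined lower-order pieces feed into $R_n$ and $S_n$ to produce the explicit binomial sums.

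The paper takes a different route. It does not carry out this computation but defers the details to \cite{Mish} and the forthcoming \cite{Nous}, giving only a geometric indication: the normalizing charts $(x_0,y_0)$ and $(x_\infty,y_\infty)$ are obtained by blowing up the point $\infty\in C$ (resp.\ $0\in C$), which lowers the self-intersection to $0$, and then invoking Savel'ev's theorem to identify the resulting neighborhood with a product $C\times\mathbb{D}_\epsilon$; each chart is the one inherited from that trivialization. This viewpoint explains conceptually why such charts exist and why the residual gauge freedom is finite-dimensional (it is the ambiguity in choosing the two product structures), but extracting the explicit closed formulas for $a_n^0,a_n^\infty,b_n^0,b_n^\infty$ would still require combinatorics equivalent to yours. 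Your approach is thus the more direct path to the statement as written; the blow-up picture trades that directness for a cleaner geometric origin of the charts.
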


A very similar result can be found in \cite{Mish}; the proof of this statement 
(which will be needed to prove the main result) will be detailled in \cite{Nous}.
Let us just mention that normalizing coordinates $(x_0,y_0)$ (resp. $(x_\infty,y_\infty)$) 
are obtained after blowing-up $\infty\in C$ (resp. $0\in C$) by the identification
of the new germ of neighborhood (with $C^2=0$) with the product $C\times \mathbb{D}_{\epsilon}$.
The normal form follows from a good choice of these trivialisations.

\begin{remark}\label{normal-form-with-fibration}
When in addition $U$ admits a fibration transverse to $C$, then there exists a normal form compatible 
with the fibation in the sense that  $a_{kn}=0$ for every $k,n$ (with notation of Theorem \ref{normal-formal-form}) and the fibration is given by $\{x=\text{constant}\}$.
\end{remark}

\begin{remark}\label{case_k>1}
The classification of germs $(U,C)$ with $C^2=k>1$ reduces to the case $C^2=1$
since the unique cyclic $k$-fold cover $\pi:\tilde U\to U$ ramifying over $C$ is such that 
$\tilde C^2=1$, where $\tilde C=\pi^{-1}(C)$. 
\end{remark}

\begin{remark}\label{U(4)}
When $C^2=1$, the formal neighborhood $U^{(4)}$ always admits a transverse fibration: the cocycle can always 
be normalized to $(\frac{1}{x},\frac{y}{x}+\cdots)$ up to order $4$. The first 
obstruction for having a transverse fibration appears in order five. 
\end{remark}

{\bf Many examples without transverse fibration.}
Consider the neighborhood $U$ given by the cocycle
$$
\Phi = \left( \frac{1}{x}+ \frac{y^5}{x^3} + \sum_{n\geq 6} (\sum_{k=3}^{n-1}\frac{a_{k,n}}{x^k} ) y^n , \frac{y}{x}  \right)
$$
which is already in normal form and suppose that $U$ admits a transverse fibration. Then by remark \ref{normal-form-with-fibration} there is a normal form compatible with the fibration, that is, there exists $A=(\alpha, \beta, \gamma, \theta) \in \mathbb{C}^3 \times \mathbb{C}^*$ such that $a_{i,j}^A=0$ for every $i,j$, where the notation stands for the coefficients of the normal form after composing with the pair $(\Phi^0, \Phi^{\infty})$ associated to $A$ as in Theorem \ref{normal-formal-form}. On the other hand, we can compute $a_{3,4}^A =  -( \gamma - \alpha \beta \theta^2 )^2$ and conclude that $\gamma = \alpha \beta \theta^2$. This leads us to $a_{3,5}^A = \theta^5 \neq 0$, which is impossible.\\


{\bf Many examples with exactly one transverse fibration.}
Consider the neighborhood $U$ given by the cocycle
$$
\Phi = \left( \frac{1}{x}, \frac{y}{x} + \sum_{n\geq 5} (\sum_{k=2}^{n-1}\frac{b_{k,n}}{x^k})y^n\right).
$$
with $b_{3,5}^2 - b_{2,5}\cdot b_{4,5} \neq 0$, which is in normal form and has a compatible transverse fibration. If there exists another fibration then we can find some $A=(\alpha, \beta, \gamma, \theta) \in \mathbb{C}^3 \times \mathbb{C}^*$ such that $a_{i,j}^A=0$ for every $i,j$. We can also assume that $\theta=1$. We compute $a_{3,4}^A=-(\gamma -\alpha \beta)^2$, which implies $\gamma = \alpha \beta$. Thus $a_{3,5}^A = \alpha b_{3,5} + \beta b_{2,5}=0$ and $a_{4,5}^A = \alpha b_{4,5} + \beta b_{3,5}=0$. By hypothesis the only solution of this system is $\alpha = \beta=0$ that also gives $\gamma=0$. We conclude that $A=(0,0,0,1)$ and the fibration is the same that the initial one.


\section{Examples with 2 fibrations transverse to $C$}\label{tangencia en C}

Consider $(U, C)$ a germ of surface as before and suppose that there are two fibrations $\mathcal{G}$ and $\mathcal{G}'$ transverse to $C$. If they are not tangent along $C$ it is easy to see that their tangency locus is a (smooth) curve transversal to $C$ at one point.

\begin{thm}\label{tangency-along-C}
Let $U$ be a neighborhood of a rational curve $C$ with self intersection $+1$ and suppose that there are two fibrations $\GG$ and $\GG'$ transverse to $C$ such that $tang(\GG, \GG')=C$. Then $(U,C)$ is the ramified covering of degree $2$ over a neighborhood of the diagonal curve $\Delta \subseteq V \subseteq \Pp^1 \times \Pp^1$  with ramification locus $\Delta$ and branching locus $C$. Moreover, there is no other fibration on $U$ transversal to $C$.
\end{thm}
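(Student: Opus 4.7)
The proof splits into two parts: first, build an explicit ramified double cover out of the two fibrations; then, rule out further fibrations using the normal-form machinery of Theorem \ref{normal-formal-form}.

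\textbf{Construction of the cover.} Since $C^{2}=1$ and each $\GG_i$ is transverse to $C$, every leaf of $\GG_i$ meets $C$ in exactly one point, so the leaf-space of $\GG_i$ is canonically identified with $C\cong\Pp^{1}$, and we obtain a holomorphic retraction $\pi_i:U\to C$ equal to the identity on $C$. Set
$$\pi:=(\pi_1,\pi_2):U\longrightarrow C\times C\simeq\Pp^{1}\times\Pp^{1}.$$
Then $\pi(C)\subseteq\Delta$, and off $C$ the transversality of $\GG,\GG'$ makes $\pi$ étale. Near a point of $C$, take coordinates $(x,y)$ with $C=\{y=0\}$ and $\GG=\{x=\mathrm{const}\}$, so $\pi_1=x$; since $\pi_2|_{C}=\mathrm{id}$ we have $\pi_2=x+yg(x,y)$, and the tangency hypothesis $\mathrm{tang}(\GG,\GG')=C$ gives $\partial_y\pi_2|_{y=0}\equiv 0$, so $g(x,0)\equiv 0$ and $\pi_2=x+y^{2}h(x,y)$. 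Since $\partial_y\pi_2=y(2h+y\partial_y h)$ and $\mathrm{tang}(\GG,\GG')$ equals $C$ as a reduced divisor, $h$ is nowhere zero on $C$. Putting $z=y\sqrt{h}$ brings $\pi$ locally into the standard form $(x,z)\mapsto(x,x+z^{2})$, so $\pi:U\to V$ is a simply ramified $2$-to-$1$ cover onto an open neighborhood $V$ of $\Delta$, with ramification locus $C$ and branch locus $\Delta$.

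\textbf{Uniqueness of the fibrations.} To exclude a third transverse fibration $\GG''$, I follow the strategy of the two examples at the end of Section \ref{sec:NormalForm}. By Remark \ref{normal-form-with-fibration}, place the cocycle of $(U,C)$ in the normal form of Theorem \ref{normal-formal-form} compatible with $\GG$, so all $a_{k,n}\equiv 0$; the double-cover description above then pins down the remaining coefficients $b_{k,n}$. A hypothetical $\GG''$ would correspond, again by Theorem \ref{normal-formal-form} and Remark \ref{normal-form-with-fibration}, to a quadruple $A=(\alpha,\beta,\gamma,\theta)$ (with $\theta=1$ without loss of generality) such that $a^{A}_{i,j}=0$ for every $i,j$. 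Expanding these conditions in increasing order $(k,n)$, exactly as in the ``many examples with exactly one transverse fibration'' calculation, produces an algebraic system whose only solutions are the trivial $A$, which gives back $\GG$, and the $A$ induced by the factor-swap symmetry of $\Pp^{1}\times\Pp^{1}$, which gives $\GG'$.

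\textbf{Main obstacle.} The first part is formal once the local model $(x,z)\mapsto(x,x+z^{2})$ is identified. The real difficulty lies in the uniqueness step: one must determine explicitly enough $b_{k,n}$'s of the double cover of $(\Pp^{1}\times\Pp^{1},\Delta)$ to verify that the system $\{a^{A}_{i,j}=0\}$ admits no spurious solutions beyond the two expected ones. The key structural input making this tractable is the $\mathbb Z/2$-symmetry swapping the two factors of $\Pp^{1}\times\Pp^{1}$, which forces strong relations among the $b_{k,n}$ and cuts the fibration equations down to a small finite problem.
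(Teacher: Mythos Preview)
Your construction of the double cover is exactly the paper's approach: the paper's ``Idea of proof'' simply says to take the first integrals $g,g':U\to C$ and note that $(g,g')$ is the $2$-fold cover, deferring all details to \cite{Nous}. Your local-model computation $(x,z)\mapsto(x,x+z^{2})$ is a correct way to fill in those details and is more complete than what appears in this note.

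For the ``no third fibration'' clause, the paper gives no argument here at all (again deferring to \cite{Nous}), so there is nothing to compare against. Your normal-form plan is consistent with the methods the paper uses elsewhere (Section~\ref{sec:NormalForm} and the proof of Theorem~\ref{3-fibrations}), and your identification of the two expected solutions $A$---the trivial one and the one coming from the lift of the factor swap on $\Pp^{1}\times\Pp^{1}$---is on target. But as you yourself flag in the ``Main obstacle'' paragraph, the argument is not actually carried out: you would still need to compute the $b_{k,n}$ of this specific neighborhood and then solve the system $\{a^{A}_{i,j}=0\}$. So this part remains a plan rather than a proof; that said, it is no less complete than what the present note provides.

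One small remark on the uniqueness strategy: since $(U,C)$ carries the deck involution $\sigma$ of the cover, a hypothetical third fibration $\GG''$ yields a fourth one $\sigma^{*}\GG''$, and both must fit into the same $4$-parameter family of normal forms. Exploiting this $\mathbb Z/2$-symmetry explicitly (rather than only invoking it to constrain the $b_{k,n}$) may shorten the computation you anticipate.
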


\begin{proof}[Idea of proof] Consider the first integrals $g,g':U\to C\simeq\mathbb P^1$ defined by 
the two fibrations; then the map $(g,g'):U\to \Pp^1 \times \Pp^1$ is the $2$-fold cover.
See \cite{Nous} for details.
\end{proof}

\begin{remark}
It is not hard to use \cite[Proposition 4.7]{Hur} or \cite[Proposition V.4.3]{BPV} in order to see that the germ $(U, C)$ of the theorem is not algebrizable. However, the field of meromorphic functions on $U$
identifies with $\mathbb C(x,y)$ and has therefore transcendence degree of $2$ over $\mathbb C$. 
\end{remark}

Suppose now that  $T = tang(\GG, \GG')$ is neither a common fiber nor $C$  and is transverse to $\GG$ and $\GG'$ at $C$. Take first integrals $g,g': U \rightarrow \Pp^1$ such that $g|_{C} \equiv g'|_{C} =x$, where $x$ is the global coordinate of $\Pp^1$, and define 
$$
\Phi : U \rightarrow \Pp^1 \times \Pp^1,  \hspace{0.1cm} p\mapsto (g(p), g'(p)).
$$
Observe that $\Phi|_{C}: C  \rightarrow \Delta$ is an isomorphism,  where $\Delta$ is the diagonal curve, and that $\Phi$ is a local biholomorphism outside $T$. It is possible to show that $\Phi(T)$ has a simple tangency with $\Delta$ in one point and that $\Phi$ is a $2:1$ covering ramifying over $\Phi(T)$. Changing the first integrals $(f,g)$ coinciding along $C$ is the same that considering the diagonal action of $PSL_2(\mathbb{C})$ on $(\Pp^1 \times \Pp^1, \Delta)$, that is, $\varphi \mapsto (\varphi(x), \varphi(y))$. Moreover, every curve with a simple tangency with $\Delta$ can be constructed by this process.

\begin{thm}
The moduli space of neighborhoods $(U,C)$ having two fibrations with a tangency locus $T$ which is neither a common fiber nor $C$,  and which is transversal to $\GG$ and $\GG'$ at $C$, is in bijection with the space of germs of curves on $(\Pp^1 \times \Pp^1, \Delta)$ with a simple tangency with $\Delta$ modulo the diagonal action of $PSL_2(\mathbb{C})$.
\end{thm}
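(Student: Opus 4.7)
The plan is to construct mutually inverse maps between the two moduli spaces.

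In the forward direction, given $(U,C)$ with its two fibrations $\GG,\GG'$, I would choose first integrals $g,g'\colon U\to \Pp^1$ satisfying $g|_C=g'|_C=x$ for a fixed coordinate $x$ on $C\simeq \Pp^1$. Any two such pairs differ by simultaneous post-composition with a single element $\varphi\in PSL_2(\mathbb{C})$, because the shared restriction to $C$ is a genuine isomorphism onto $\Pp^1$ and so forbids independent reparameterisations of the two factors. Setting $\Phi=(g,g')$ and $B:=\Phi(T)$ then produces a germ of curve in $(\Pp^1\times\Pp^1,\Delta)$ well-defined modulo the diagonal $PSL_2(\mathbb{C})$-action. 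The hypothesis that $T$ is neither $C$ nor a common fiber of $\GG,\GG'$ is precisely what ensures that $\Phi|_T$ is nonconstant (otherwise both $dg$ and $dg'$ would vanish along $T$), so that $B$ is indeed a curve.

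Next I would establish the geometric features of $\Phi$ and $B$ by local analysis at the unique intersection point $p_0\in T\cap C$. A short computation gives $dg(p_0)=dg'(p_0)$: since $g-g'$ vanishes on $C$ its differential kills $T_{p_0}C$; and on $T$ one has $dg'=\lambda\,dg$ with $\lambda(p_0)=1$ forced by matching restrictions to $T_{p_0}C$. Hence $d\Phi(p_0)$ has rank one with kernel transverse to both $T$ and $C$ (using the transversality hypothesis on $T$). In adapted local coordinates $(u,v)$ with $C=\{v=0\}$ and $\ker d\Phi(p_0)=\langle \partial_v\rangle$, writing $g=u+v\widetilde g$ and $g'=u+v\widetilde g'$ with $\widetilde g(0)=\widetilde g'(0)=0$, a direct calculation shows that $T$ is smooth and transverse to $C$, that $B=\Phi(T)$ is smooth and simply tangent to $\Delta$ at $\Phi(p_0)$, and that $\Phi$ is a $2:1$ ramified cover near $p_0$. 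Off $T$, $\Phi$ is a local biholomorphism; along $T\setminus\{p_0\}$ it is a simple fold; so globally $\Phi\colon (U,C)\to (V,\Delta)$ is a $2:1$ ramified cover of a neighborhood $V$ of $\Delta$, with branch curve $B$.

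For the reverse construction, I would start from a germ of smooth curve $B\subset \Pp^1\times\Pp^1$ with a simple tangency with $\Delta$ at one point $q$, and form the double cover $\pi\colon U\to V$ of a neighborhood of $\Delta$ ramified over $B$. Smoothness of $B$ ensures smoothness of $U$; the tangency forces $\pi^{-1}(\Delta)=C\cup C'$ to split into two smooth rational curves meeting transversally at $p_0:=\pi^{-1}(q)$. From $\pi^{*}\Delta=C+C'$, the identity $(C+C')^2=2\,\Delta^2=4$, $C\cdot C'=1$ and the symmetry $C^2={C'}^2$, one deduces $C^2=1$. Pulling back the two projections of $\Pp^1\times\Pp^1$ produces fibrations $\GG,\GG'$ transverse to $C$ whose tangency locus is $\pi^{-1}(B)$, mapped to $B$ by $\pi$. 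The two constructions are mutually inverse by uniqueness of the double cover branched along a prescribed smooth curve, and the equivalence relations match because biholomorphisms of $(V,\Delta)\subset \Pp^1\times\Pp^1$ preserving both projections are exactly the diagonal $PSL_2(\mathbb{C})$-action.

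The hardest step is making the global $2:1$ assertion for $\Phi$ fully rigorous: the local analysis handles the point $p_0$ and the complement of $T$, but near $p_0$ the preimage $\Phi^{-1}(\Delta)$ carries two branches, only one of which ($C$ itself) belongs to the germ $(U,C)$, and one must check that no extraneous sheets appear as the germ is shrunk along $C$. Once this global $2:1$ structure is established, the compatibility of the two constructions and the matching of the equivalence classes becomes essentially formal. The details appear in \cite{Nous}.
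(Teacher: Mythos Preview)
Your proposal is correct and follows essentially the same route as the paper: form $\Phi=(g,g')$ from normalized first integrals, identify $\Phi$ as a $2:1$ cover branched over $\Phi(T)$ (a curve simply tangent to $\Delta$), note that the ambiguity in $(g,g')$ is exactly the diagonal $PSL_2(\mathbb{C})$-action, and invert via the branched double cover over a prescribed $B$. The paper only sketches this and defers details to \cite{Nous}; you have supplied more of the local analysis at $p_0=T\cap C$ and the intersection-theoretic computation $C^2=1$ in the reverse direction, but there is no substantive difference in strategy.
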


\begin{cor}
The moduli space of neighborhoods $(U,C)$ having two fibrations transverse to $C$ is infinitely dimensional.
\end{cor}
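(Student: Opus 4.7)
The plan is to derive the corollary directly from the preceding theorem, which identifies the moduli space in question with the space of germs of curves on $(\Pp^1\times\Pp^1,\Delta)$ having a simple tangency with the diagonal, modulo the diagonal action of $PSL_2(\CC)$. So the task reduces to showing that this quotient is infinite dimensional.

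First I would fix a base point, say $(0,0)\in\Delta$, and parametrize germs of smooth curves having a simple tangency with $\Delta$ at this point. In local coordinates $(x,y)$ centered at $(0,0)$, the diagonal $\Delta$ is given by $\{x=y\}$, and a smooth germ of curve tangent to $\Delta$ with contact of order exactly two can be written as a graph
\[
 y = x + \sum_{n\geq 2} c_n x^n,\qquad c_2\neq 0.
\]
This yields a parameter space with countably many independent coordinates $(c_n)_{n\geq 2}$, hence of infinite dimension.

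Next I would compare this with the group action. The diagonal action of $PSL_2(\CC)$ on $(\Pp^1\times\Pp^1,\Delta)$ is only $3$-dimensional, and its isotropy at the chosen tangency point is positive dimensional once one fixes where the curve meets $\Delta$. A short formal computation shows that the effect on the Taylor coefficients $(c_n)$ can absorb only finitely many of them: concretely, varying $\varphi\in PSL_2(\CC)$ changes $(c_2,c_3,\dots)$ by a map whose image lies in a finite-dimensional analytic subset of the infinite-dimensional jet space. Quotienting an infinite dimensional parameter space by a finite dimensional group action still leaves an infinite dimensional moduli, which gives the corollary. One may also notice that the base point itself may be varied along $\Delta$, but this is already accounted for by the $PSL_2(\CC)$ action.

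The only mildly delicate point is to argue that the orbits of $PSL_2(\CC)$ really do sit inside finite-codimension (indeed finite-dimensional) strata, so that the quotient genuinely inherits infinitely many moduli; this can be made rigorous by truncating to any finite jet of sufficiently high order and comparing dimensions of the jet space with the $3$-dimensional $PSL_2(\CC)$, producing infinitely many invariants $c_n$ modulo polynomial relations coming from the group. This is the heart of the argument; the rest is a direct appeal to the preceding theorem.
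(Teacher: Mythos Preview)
Your proposal is correct and is precisely the argument the paper leaves implicit: the corollary is stated without proof because it is the obvious consequence of the preceding theorem, and you have supplied the natural dimension count (infinitely many Taylor coefficients $c_n$ for the tangent germ, versus the $3$-dimensional diagonal $PSL_2(\CC)$-action). One small imprecision worth fixing: the preceding theorem does not identify \emph{the} moduli space of neighborhoods with two transverse fibrations with the space of tangent germs, but only the sub-locus where $\mathrm{tang}(\GG,\GG')$ is neither $C$ nor a common fiber and meets both fibrations transversally at $C$; since exhibiting an infinite-dimensional subfamily already proves the corollary, your argument goes through unchanged.
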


\section{The case of $3$ transverse fibrations}
 Our goal is to show that the only case having $3$ fibrations is the case of the projective plane.
 
 \begin{thm}\label{3-fibrations}
Let $(U, C)$ be a germ of surface containing a rational curve $C$ with $C^2= +1$. Assume that $U$ admits $3$ regular fibrations $\GG$, $\GG'$ and $\GG''$ transverse to $C$, then $(U, C)$ is isomorphic to the germ $(\PP, L_0)$, where $L_0$ is a line in $\PP$.
 \end{thm}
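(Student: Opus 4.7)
My plan is to extend the normal-form computations of Section \ref{tangencia en C} to arbitrary order in $y$ and to show that three fibrations provide enough constraints to force the trivial cocycle $\Phi=(1/x,\,y/x)$ of $(\PP,L_{0})$. First, by Remark \ref{normal-form-with-fibration} I would choose a cocycle for $(U,C)$ in normal form compatible with the first fibration $\GG$, so that $a_{k,n}=0$ for all $k,n$ and $\GG=\{x=\mathrm{const}\}$. The other fibrations $\GG'$ and $\GG''$ then correspond to two parameter tuples $A_{i}=(\alpha_{i},\beta_{i},\gamma_{i},\theta_{i})\in\CC^{3}\times\CC^{*}$, $i=1,2$, for which applying the associated $(\Phi^{0}_{i},\Phi^{\infty}_{i})$ of Theorem \ref{normal-formal-form} still yields a cocycle with vanishing $a$-coefficients. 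The order-$4$ equation $a^{A}_{3,4}=-(\gamma-\alpha\beta\theta^{2})^{2}$ (as in Section \ref{tangencia en C}) immediately gives $\gamma_{i}=\alpha_{i}\beta_{i}\theta_{i}^{2}$ for $i=1,2$.

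Next I would observe that the tangent slope of $\GG_{i}$ along $C$ is the affine function $\theta_{i}(\alpha_{i}x+\beta_{i})$, while that of $\GG$ is $0$. If two of these three slopes were identical as functions of $x$, the corresponding pair of fibrations would have tangency locus containing $C$ and, by Theorem \ref{tangency-along-C}, $(U,C)$ could not admit a third transverse fibration; hence the three slopes are pairwise distinct linear functions of $x$, and in particular $(\alpha_{i},\beta_{i})\neq(0,0)$ for $i=1,2$.

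The core of the argument will be an induction on $n\geq 3$ showing $b_{k,n}=0$ for all $k=2,\ldots,n-1$. Under the inductive assumption $b_{k,m}=0$ for $m<n$, a direct inspection of the transformation formulas of Theorem \ref{normal-formal-form}, after substituting $\gamma=\alpha\beta\theta^{2}$, shows that the $n-3$ equations $a^{A}_{k,n}=0$ reduce to a $\CC$-linear system in $b_{2,n},\ldots,b_{n-1,n}$ whose leading Sylvester-type block has the shape $\alpha\, b_{k,n}+\beta\, b_{k-1,n}=0$, as is already visible at order $5$ in the example of Section \ref{tangencia en C} (which I would redo for general $\theta$). Writing these equations for both $A_{1}$ and $A_{2}$ yields a linear system of $2(n-3)$ equations on the $n-2$ unknowns $b_{\ast,n}$, and the inductive step reduces to verifying that this system has full column rank $n-2$.

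The principal technical obstacle will be precisely this rank verification. When the projective classes $[\alpha_{1}:\beta_{1}]$ and $[\alpha_{2}:\beta_{2}]$ are distinct in $\Pp^{1}$, the two Sylvester blocks are linearly independent and the rank follows from a straightforward $3\times 3$ minor computation. The delicate case is when the two slopes are proportional (three collinear fibrations, with $\theta_{1}\neq\theta_{2}$ distinguishing $\GG'$ from $\GG''$): the two leading Sylvester blocks then coincide, and one must exploit the $\theta$-dependent higher-order contributions to $a^{A}_{k,n}$ coming from $\Phi^{\infty}$, which are invisible in the leading block, to produce the additional linearly independent equation that kills the residual one-dimensional kernel. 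Once the induction closes, all $b_{k,n}$ vanish, the cocycle reduces to $\Phi=(1/x,\,y/x)$, and $(U,C)$ is biholomorphic to $(\PP,L_{0})$, as required.
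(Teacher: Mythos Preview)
Your strategy---fix the cocycle compatible with $\GG$, eliminate $\gamma_i$ at order~$4$, then run an induction on $n$ using a linear Sylvester-type system $\alpha\, b_{k,n}+\beta\, b_{k-1,n}=0$ coming from $a^A_{k,n}=0$---is close in spirit to Lemma~\ref{(a1,0,0,1), (0,b2,0,1)}, which is essentially that induction in the special situation $\beta_1=\gamma_1=0$, $\alpha_2=\gamma_2=0$, $b_{2,3}=0$. The paper, however, does not attempt a direct induction for general parameters: it first normalizes $b_{2,3}\in\{0,1\}$ and (in the case presented) uses the position of the tangency points to force $\beta_1=\alpha_2=0$; the case $b_{2,3}=1$ is then disposed of by an explicit Groebner-basis computation through order~$7$, and only afterwards is the inductive lemma invoked in the residual case.

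The genuine gap in your plan is the base of the induction. At order $n=3$ the normal form carries no $a$-coefficient whatsoever, so your system is empty there and cannot force $b_{2,3}=0$. The formula $a^A_{3,4}=-(\gamma-\alpha\beta\theta^2)^2$ that you quote is computed in the examples of Section~\ref{sec:NormalForm} under the standing hypothesis $b_{2,3}=0$ (in fact $b_{k,m}=0$ for all $m\le 4$); when $b_{2,3}\neq 0$, the coefficients $a^0_n,a^\infty_n$ of Theorem~\ref{normal-formal-form} already involve $b_{2,3}$, so $a^A_{3,4}$ acquires extra terms, the clean elimination of $\gamma$ fails, and the higher-order equations $a^A_{k,n}=0$ are no longer linear in the $b_{\ast,n}$ of a single level. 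That the case $b_{2,3}=1$ is genuinely outside your linear framework is precisely what the paper's proof exhibits: it needs all the equations $a^i_{k,n}=0$ for $3\le k<n\le 7$ and a Groebner-basis reduction to reach the contradiction $\alpha_1\beta_2=0$. To salvage your approach you would need a separate, necessarily nonlinear, argument ruling out $b_{2,3}\neq 0$ before the Sylvester induction can even start---and this is exactly the step where the paper's finite computation does the real work. Your handling of the degenerate ``proportional slopes'' case is also only sketched, but that is a secondary issue compared to the missing base case.
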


The detailed proof will be given in \cite{Nous}, however, we present here an special case in order to show the main ideas.  Before proving the theorem we make some considerations. First of all, observe that there are normal forms 
$$
\Phi = \left(\frac{1}{x}, \frac{y}{x} + (\frac{b_{2,3}}{x^2})y^3 + \ldots\right),  \Phi^i = \left(\frac{1}{x}, \frac{y}{x} + (\frac{b^{i}_{2,3}}{x^2})y^3 + \ldots \right),\hspace{0.2cm} i=1,2,
$$
compatible with $\GG$, $\GG'$ and $\GG''$ respectively, that is, in which the fibration is given by $\{x=const.\}$ (Remark \ref{normal-form-with-fibration}). Denote by $(\alpha_i, \beta_i, \gamma_i, \theta_i)$ the parameter corresponding to the pair of biholomorphisms $(\Phi^0_i, \Phi^{\infty}_i)$ leaving $\Phi$ into $\Phi^i$ ($i=1,2$) given by Theorem \ref{normal-formal-form}. Note that coefficients $(a^i_{j,k}, b^i_{j,k})$ of the normal form $\Phi^i$ depend of the coefficients $b_{j,k}$ and $(\alpha_i, \beta_i, \gamma_i, \theta_i)$. We clearly have $a^i_{j,k}=0$ for $i=1,2$.

By composing $\Phi$ with the change of coordinates associated to $(0,0,0,\theta)$, which does not affect the fibrations, we can assume that $b_{2,3}=1$ or $b_{2,3}=0$. In a similar way we can also suppose that $\theta_1 = \theta_2 =1$.

\xymatrix{
&&&&& \Phi  \ar[ld]_{(\alpha_1, \beta_1, \gamma_1, 1 )} \ar[rd]^{(\alpha_2, \beta_2, \gamma_2, 1 )} & \\
&&&&\Phi^1  & & \Phi^2 
}

We will need the following lemma.

\begin{lemma}\label{(a1,0,0,1), (0,b2,0,1)}
If $b_{2,3}=0$, $\alpha_1 \neq 0$, $\beta_1=\gamma_1=0$, and $\beta_2\neq 0$, $ \alpha_2=\gamma_2=0$, then $b_{ij}=0$ for every $i,j$.
\end{lemma}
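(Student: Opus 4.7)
My plan is to prove the lemma by induction on $n \geq 3$, showing at each stage that $b_{k,n} = 0$ for every $2 \leq k \leq n-1$. The base case $n = 3$ is the standing hypothesis $b_{2,3} = 0$.

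For the inductive step, I would first substitute the specific parameters $(\alpha_1, 0, 0, 1)$ and $(0, \beta_2, 0, 1)$ into the formulas of Theorem~\ref{normal-formal-form}. Under the inductive hypothesis $b_{k,m}=0$ for every $m<n$, the resulting expressions collapse drastically: $\Phi^{\infty}_1$ reduces to the pure translation $(x,y) \mapsto (x + \alpha_1 y, y)$, $\Phi^0_2$ to the translation $(x,y) \mapsto (x + \beta_2 y, y)$, while $\Phi^0_1$ agrees with the shear $(x,y) \mapsto (x/(1-\alpha_1 y), y/(1-\alpha_1 y))$ up to a single correction of order $y^n$ in the first coordinate carrying only $b_{2,n}$, and symmetrically $\Phi^{\infty}_2$ agrees with $(x,y) \mapsto (x/(1-\beta_2 y), y/(1-\beta_2 y))$ up to a correction of order $y^n$ carrying only $b_{n-1,n}$.

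I would then expand the first coordinate of $\Phi^i = \Phi^{\infty}_i \circ \Phi \circ \Phi^0_i$ modulo $y^{n+1}$ and read off the coefficient of $y^n/x^{k'}$. My expectation is that $\Phi^1$ contributes $\alpha_1 b_{k',n}$ to this coefficient for $k' = 3, \ldots, n-1$, while the entries at $k' = 2$ and $k' = n$ cancel automatically; symmetrically, $\Phi^2$ contributes $\beta_2 b_{k'-1, n}$ for the same range, again with automatic cancellation at $k' = 2$ and $k' = n$. Imposing $a^1_{k',n} = a^2_{k',n} = 0$ and using $\alpha_1, \beta_2 \neq 0$ then forces $b_{k,n} = 0$ for $k \in \{3, \ldots, n-1\}$ and separately for $k \in \{2, \ldots, n-2\}$; the two ranges jointly exhaust $2 \leq k \leq n-1$, closing the induction.

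The main obstacle I foresee is the bookkeeping at order $y^n$: one has to verify that every cross contribution between a $b$-correction inside $\Phi^0_i$ or $\Phi^{\infty}_i$ and a higher-degree $b$-term of $\Phi$ sits in $y^{n+1}$ or deeper, so that the order-$y^n$ equations remain linear in $(b_{k,n})_{k=2}^{n-1}$ alone; and one has to confirm that the automatic cancellations at the ``non-normal-form'' positions $k' = 2$ and $k' = n$ really occur, since these both certify that $\Phi^i$ does land in normal form and show that $\Phi^1$ and $\Phi^2$ impose genuinely complementary constraints on the family $(b_{k,n})_{k=2}^{n-1}$.
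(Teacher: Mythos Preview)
The paper does not actually prove this lemma; it is stated without proof and deferred to the companion paper \cite{Nous}. So there is no argument in the present text to compare against, and the relevant question is simply whether your plan is sound.

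It is. Substituting $(\alpha_1,0,0,1)$ and $(0,\beta_2,0,1)$ into the formulae of Theorem~\ref{normal-formal-form} and using the inductive hypothesis $b_{k,m}=0$ for $m<n$ gives exactly the shapes you describe: $\Phi^0_1$ equals the M\"obius shear $\bigl(x/(1-\alpha_1 y),\,y/(1-\alpha_1 y)\bigr)$ plus the single correction $\alpha_1 b_{2,n}y^n$ in the first slot, $\Phi^\infty_2$ equals $\bigl(x/(1-\beta_2 y),\,y/(1-\beta_2 y)\bigr)$ minus $\beta_2 b_{n-1,n}y^n$, and the remaining two pieces are pure translations. Carrying out the composition modulo $y^{n+1}$ one finds
\[
a^1_{k',n}=\alpha_1\,b_{k',n}\quad(3\le k'\le n-1),\qquad
a^2_{k',n}=\beta_2\,b_{k'-1,n}\quad(3\le k'\le n-1),
\]
with the boundary terms at $k'=2$ (for $\Phi^1$) and $k'=n$ (for $\Phi^2$) cancelling precisely against the two correction terms you isolated. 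The two systems then kill $b_{k,n}$ for $k\in\{3,\dots,n-1\}$ and $k\in\{2,\dots,n-2\}$ respectively, which together cover the full range $2\le k\le n-1$.

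The bookkeeping worry you flag is harmless: every cross term between a correction in $\Phi^0_i,\Phi^\infty_i$ and a $b$-term of $\Phi$ carries at least $y\cdot y^n=y^{n+1}$, since the corrections themselves are of order $y^n$ and the nontrivial part of $\Phi$ starts at order $y^n$ under the inductive hypothesis. So the order-$y^n$ equations are genuinely linear in $(b_{k,n})_{k=2}^{n-1}$, as you need.
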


\begin{proof}[Proof of Theorem \ref{3-fibrations}]
We consider the case \textbf{$tang(\GG, \GG') \cap C  \neq  tang(\GG, \GG'') \cap C $}. 

Thus we assume $tang(\GG, \GG') \cap C =\{0\}$ and $tang(\GG, \GG'') \cap C =\{\infty \}$. This implies, by the form of the pair $(\Phi^0_i, \Phi^{\infty}_i)$ associated to $(\alpha_i, \beta_i, \gamma_i, 1)$ given in Theorem \ref{normal-formal-form}, that $\beta_1 = \alpha_2 =0$. Observe also that if $\alpha_1=0$ then $\GG$ and $\GG'$ are tangent along $C$, but Theorem \ref{tangency-along-C} implies that this is not the case, thus we can suppose $\alpha_1 \neq 0$. Analogously we also assume $\beta_2 \neq 0$. 

If $b_{2,3}=1$ we consider the equations 
\begin{eqnarray*}
a^1_{3,4} = a^1_{3,5}=a^1_{4,5}=a^1_{3,6}=a^1_{4,6}=a^1_{5,6}=a^1_{3,7}=a^1_{4,7}=a^1_{5,7}=a^1_{6,7}=0\\
a^2_{3,4} = a^2_{3,5}=a^2_{4,5}=a^2_{3,6}=a^2_{4,6}=a^2_{5,6}=a^2_{3,7}=a^2_{4,7}=a^2_{5,7}=a^2_{6,7}=0\\
\end{eqnarray*}
and, using $a^1_{i,j}$, $3 \leq i < j \leq 7$, $a^2_{3,4}, a^2_{3,5}, a^2_{3,6}, a^2_{3,7}$ we find $$b_{2,4},b_{3,4}, b_{2,5}, b_{3,5},  b_{4,5}, b_{2,6}, b_{3,6}, b_{4,6}, b_{5,6}, b_{2,7}, b_{3,7}, b_{4,7}, b_{5,7}, b_{6,7}$$ depending of $\alpha_1$, $\gamma_1$, $\beta_2$ and $\gamma_2$.

We replace them in the remaining equations and use Groebner basis in order to write the ideal 
$$
\langle a^2_{4,5}, a^2_{4,6}, a^2_{4,7}, a^2_{5,6}, a^2_{5,7}, a^2_{6,7}  \rangle = \langle \gamma_2, \alpha_1 \beta_2 \rangle
$$
but this implies that $\alpha_1 \beta_2 =0$, which is not possible.

If $b_{2,3}=0$ with a similar argument we arrive in $\lambda_1 = \lambda_2 =0$ and thus we conclude by lemma \ref{(a1,0,0,1), (0,b2,0,1)}.\\
\end{proof}

We finish by giving the idea for proving the second part of Theorem \ref{Principal}. Assume then that $C^2=n \geq 2$ and $(U,C)$ admits two fibrations transverse to $C$. We can prove that the tangency locus is not $C$. Now, blowing up the curve $C$ at $n$ different points and looking to the tangency locus between the Riccati foliations obtained in the neighborhood of the transform of $C$, which has zero self-intersection, we see that $C^2=2$ and the fibrations have empty tangency locus. We just take first integrals $g,g':U\to C\simeq\mathbb P^1$ defined by 
the two fibrations and remark that the map $(g,g'):U\to \Pp^1 \times \Pp^1$ is a biholomorphism over a neighborhood of the diagonal curve.

All the results of this note will be detailled and completed in \cite{Nous}.


\begin{thebibliography}{99}
\frenchspacing

\bibitem{BPV}
{\sc W. Barth, C. Peters, A. Van de Ven },
\emph{Compact complex surfaces.}
Springer-Verlag (1984).

\bibitem{Nous}
{\sc M. Falla Luza and F. Loray}
\emph{Projective connections and neighborhoods of rational lines.}
In preparation.

\bibitem{Grauert}
{\sc H. Grauert}
\emph{\"Uber Modifikationen und exzeptionelle analytische Mengen.}
Math. Ann. 146 (1962) 331-368. 

\bibitem{Hur}
{\sc J. C. Hurtubise and N. Kamran },
\emph{Projective connections, double fibrations, and formal neighborhoods of lines.}
Math. Ann. 292 (1992) 383-409.

\bibitem{LB}
{\sc S. R. LeBrun Jr.},
\emph{Spaces of complex geodesics and related structures.}
Phd thesis.

\bibitem{Mish}
{\sc M. B. Mishustin},
\emph{Neighborhoods of the Riemann sphere in complex surfaces.}
Funct. Anal. Appl. 27 (1993) 176-185.

\bibitem{Savelev}
{\sc V. I. Savel'ev},
\emph{Zero-type imbedding of a sphere into complex surfaces.}
Vestnik Moskov. Univ. Ser. I Mat. Mekh. 1982, no. 4, 28-32, 85. 

\bibitem{Wo}
{W. Kry\'nski},
\emph{Webs and projective structures on a plane.}
Differential Geometry and its Applications, 7 (2014) 133-140.


\end{thebibliography}
\end{document}